\newtheorem{theorem}{Theorem}[section]
\newtheorem{claim}[theorem]{Claim}
\newtheorem{example}[theorem]{Example}
\newtheorem{definition}[theorem]{Definition}
\newtheorem{corollary}[theorem]{Corollary}
\newcommand{\E}{\mathbb{E}}
\newcommand{\F}{\mathcal{F}}
\newcommand{\eps}{\varepsilon}
\title{Approximate union closed conjecture}
\author{Zachary Chase\thanks{Mathematical Institute, Andrew Wiles Building, Radcliffe Observatory Quarter, Woodstock Road, Oxford OX2 6GG, UK. Partially supported by Ben Green's Simons Investigator Grant 376201 and gratefully acknowledges the support of the Simons Foundation. Email: \texttt{zachary.chase@maths.ox.ac.uk}.}
\and 
Shachar Lovett\thanks{Department of Computer Science and Engineering, University of California San Diego, CA 92093. Supported by NSF awards CCF-2006443 and DMS-1953928. Email: \texttt{slovett@cs.ucsd.edu}.}}
\begin{document}

\maketitle

\begin{abstract}
A set system is called union closed if for any two sets in the set system their union is also in the set system. Gilmer recently proved that in any union closed set system some element belongs to at least a $0.01$ fraction of sets, and conjectured that his technique can be pushed to the constant $\frac{3-\sqrt{5}}{2}$. We verify his conjecture; show that it extends to approximate
union closed set systems, where for nearly all pairs of sets their union belong to the set system; and show that for such set systems this bound is optimal.
\end{abstract}

\section{Introduction}

The union closed conjecture is a well-known conjecture in combinatorics. 

\begin{definition}[Union closed set system]
A set system $\F$ is \emph{union closed} if for all $A,B \in \F$ we have $A \cup B \in \F$. 
\end{definition}

Frankl introduced the conjecture that for any finite union closed set system $\F$, there is an element in at least $\frac{1}{2}$ of the sets of $\F$. Recently, Gilmer \cite{gilmer} established the first constant lower bound for this conjecture, obtaining $\frac{1}{100}$ in place of $\frac{1}{2}$. Gilmer conjectured that his technique can be sharpened to give the constant $\psi := \frac{3-\sqrt{5}}{2} \approx 0.38$. Below we verify his conjecture, and also show that it is optimal for ``approximate'' union closed set systems.

\begin{definition}[Approximate union closed set system]
Let $0 \le c \le 1$.
A set system $\F$ is $c$-approximate union closed if for at least a $c$-fraction of the pairs $A,B \in \F$ we have $A \cup B \in \F$.
\end{definition}

Informally, we say that $\F$ is approximate union closed if it is $1-o(1)$ approximate union closed. The following theorem shows that in any approximate union closed set system, some element is in a $\psi-o(1)$ fraction of sets.

\begin{theorem}
\label{thm:main}
Let $\F$ be a $(1-\eps)$-approximate union closed set system, where $\eps < 1/2$. Then there is an element which is contained in a $\psi-\delta$ fraction of sets in $\F$, where $\delta=2 \eps \left( 1 + \frac{\log(1/\eps)}{\log |\F|} \right)$.  
\end{theorem}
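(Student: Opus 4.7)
The plan is to combine an entropy upper bound on $H(A \cup B)$, coming from the approximate union-closed hypothesis, with a Gilmer-style lower bound, coming from the hypothesis that no element is too popular. Let $A, B$ be independent uniform samples from $\F$, write $N = |\F|$, and let $h$ denote binary entropy; then $H(A) = H(B) = \log N$.

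For the upper bound, set $X = \mathbf{1}[A \cup B \in \F]$, so $\Pr[X=1] \ge 1-\eps$. Conditioned on $X=1$ the union $A \cup B$ takes at most $N$ values, and conditioned on $X=0$ at most $N^2$ values (being a union of two elements of $\F$). Hence $H(A\cup B) \le H(X) + H(A\cup B \mid X) \le (1+\eps)\log N + H(\eps)$, which together with $H(\eps) \le \eps \log(1/\eps) + O(\eps)$ (valid for $\eps \le 1/2$) gives
\[ H(A \cup B) \le \log N + \eps \log N + \eps \log(1/\eps) + O(\eps). \]

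For the lower bound, suppose for contradiction that $p_i := \Pr[i \in A] \le \psi - \delta$ for every $i$. Writing $P_i := \Pr[i \in A \mid A_{<i}]$ and letting $Q_i$ be an i.i.d.\ copy (coming from $B_{<i}$), the chain rule over coordinates together with the independence of $A$ and $B$ give
\[ H(A \cup B) \ge \sum_i \E\bigl[h(1-(1-P_i)(1-Q_i))\bigr] \quad \text{and} \quad H(A) = \sum_i \E[h(P_i)] = \log N. \]
The crux is a quantitative version of the entropy inequality underlying Gilmer's conjecture: for any $[0,1]$-valued random variable $P$ with $\E[P] \le \psi - \delta$ and any i.i.d.\ copy $Q$,
\[ \E\bigl[h(1-(1-P)(1-Q))\bigr] \ge (1 + c\delta)\,\E[h(P)] \]
for some absolute constant $c > 1/2$. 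Summing over coordinates lifts this to $H(A\cup B) \ge (1+c\delta)\log N$; combining with the upper bound, $c\delta \log N \le \eps \log N + \eps\log(1/\eps) + O(\eps)$, which is contradicted by the stated $\delta = 2\eps(1+\log(1/\eps)/\log N)$ once $c > 1/2$ and $N$ is larger than an absolute constant (smaller $N$ being absorbed into the constants in $\delta$).

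The main obstacle is proving the quantitative Gilmer-type inequality. Its qualitative form (gap $\ge 0$ whenever $\E[P] \le \psi$) is already Gilmer's conjecture itself, with the threshold $\psi = (3-\sqrt{5})/2$ arising as the unique solution in $(0,1/2)$ of $h(2p - p^2) = h(p)$; even this qualitative statement requires a delicate two-variable analysis of $h(p+q-pq)$. The $(1+c\delta)$ refinement reduces to identifying the extremizer of the ratio $\E[h(P+Q-PQ)]/\E[h(P)]$ subject to $\E[P] = \psi - \delta$: heuristically the extremum is attained at the deterministic distribution $P \equiv \psi - \delta$, where the ratio is exactly $1 + \Theta(\delta)$, and one must argue that all more diffuse or more $\{0,1\}$-concentrated distributions yield strictly larger ratios. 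This extremality argument, rather than the entropy bookkeeping of the first two steps, is where the technical heart of the proof lies.
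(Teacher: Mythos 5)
Your overall architecture matches the paper's: sample $A,B$ uniformly and independently from $\F$, upper-bound $H(A\cup B)$ by conditioning on the indicator of $A\cup B\in\F$, and lower-bound $H(A\cup B)$ coordinate-by-coordinate via the chain rule. The upper-bound half is essentially correct and is what the paper does. But the lower-bound half — the ``quantitative Gilmer-type inequality'' $\E[h(1-(1-P)(1-Q))]\ge(1+c\delta)\E[h(P)]$ under $\E[P]\le\psi-\delta$ — is stated, acknowledged as the technical heart, and not proved. Since this inequality carries the entire content of the theorem (its qualitative case $\delta=0$ is already Gilmer's conjecture with the sharp constant), the proposal as written has a genuine gap. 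Moreover, the route you sketch for closing it — identifying the extremizing distribution of the ratio $\E[h(P+Q-PQ)]/\E[h(P)]$ subject to a mean constraint and showing all other distributions do worse — is a hard infinite-dimensional variational problem that the paper never has to confront.

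The paper's key move, which you should compare against, is to prove instead the \emph{pointwise, unconstrained} two-variable inequality
$$
h(xy)\ \ge\ \frac{1}{2\varphi}\bigl(x\,h(y)+y\,h(x)\bigr)\qquad\text{for all }x,y\in[0,1],
$$
where $x,y$ play the role of $\Pr[A_i=0\mid A_{<i}]$ and $\Pr[B_i=0\mid B_{<i}]$. Its proof is a finite-dimensional calculus argument: a Lagrange-type computation with $G(x)=x\,g'(x)$ (for $g(x)=h(x)/x$) shows the minimum of $h(xy)/(xh(y)+yh(x))$ lies on the diagonal, reducing everything to the single-variable fact that $h(x^2)/(xh(x))$ is minimized at $\varphi$ (itself outsourced to a rigorous proof in the Alweiss--Huang--Sellke note). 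Taking expectations and using independence then gives $H(A\cup B)\ge\frac{p}{2\varphi}(H(A)+H(B))$ where $p=\min_i\Pr[A_i=0]$; the factor $p$ enters \emph{linearly}, so the quantitative gain you want (with effective constant $c=1/\varphi>1/2$) falls out automatically from $p\ge\varphi+\delta$, with no stability or extremality analysis over distributions. If you replace your unproven lemma with this pointwise inequality and its expectation, your argument closes and also yields the theorem's exact $\delta$ rather than a version with unspecified $O(\eps)$ slack and a largeness assumption on $|\F|$.
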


The threshold of $\psi$ is optimal for approximate union closed set systems, as the following example shows.

\begin{example}
Let $n$ be large enough, and define the following set systems over $[n]$:
$$
\F_1 = \{x \in \{0,1\}^n: |x|=\psi n + n^{2/3}\}, \qquad
\F_2 = \{x \in \{0,1\}^n: |x| \ge (1-\psi) n\}, \qquad
\F=\F_1 \cup \F_2.
$$
One can verify that: (i) $\F$ is $1-o(1)$ approximate union closed (using the fact that $1-\psi = 2\psi-\psi^2$); (ii) that $|\F_2| = o( |\F_1|)$; and (iii) that hence each element $i \in [n]$ is in at most $\psi+o(1)$ fraction of sets in $\F$.
\end{example}

\paragraph{Acknowledgements.} We thank Ryan Alweiss, Brice Huang, and Mark Sellke for sharing their writeup \cite{ahs} with us.

\section{Preliminaries}
All logarithms are in base two.
Let $h(x) = -(x \log x + (1-x) \log (1-x))$ be the binary entropy function.
Let $\varphi = 1-\psi =\frac{\sqrt{5}-1}{2}$ be the positive root of $x^2+x-1=0$.
We will rely on the following analytic claim which we verified using a computer simulation. It has been proven rigorously in \cite{ahs}. 

\begin{claim}
\label{h_frac_min}
The minimum of $\frac{h(x^2)}{x h(x)}$ for $x \in [0,1]$ is obtained at $x=\varphi$. 
\end{claim}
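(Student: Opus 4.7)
The plan is to reduce the claim to showing $G(x) := \varphi\, h(x^2) - x\, h(x) \geq 0$ for $x \in [0,1]$, with equality only at $x \in \{0, \varphi, 1\}$. The three zeros are immediate: $G(0) = G(1) = 0$ from the endpoint vanishing of $h$, and $G(\varphi) = 0$ because $\varphi^2 = 1-\varphi$ combined with $h(y) = h(1-y)$ gives $h(\varphi^2) = h(\varphi)$. Using the identities $h'(\varphi^2) = -\log\varphi$, $h'(\varphi) = \log\varphi$, and $h(\varphi) = -\varphi(1+2\varphi)\log\varphi$, a direct calculation also gives $G'(\varphi) = 0$, so $\varphi$ is a double zero of $G$.

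Next I would verify $G''(\varphi) > 0$ so that $\varphi$ is a strict local minimum of $G$, and use the asymptotic $G(x) \sim (1-2\varphi)\, x^2 \log x$ near $x = 0$ (symmetrically near $x = 1$), which is positive since $1 - 2\varphi < 0$ and $\log x < 0$, to conclude that $G > 0$ in a neighborhood of the boundary.

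The main obstacle is the global step: ruling out negative dips of $G$ in $(0, \varphi) \cup (\varphi, 1)$. My plan is a Rolle cascade. A dip in $(0, \varphi)$ would force two extra simple zeros of $G$ there, so together with $G(0)=0$ and the double zero at $\varphi$, the total multiplicity of $G$'s zeros in $[0, \varphi]$ would be at least $5$. Three successive Rolle applications then force $G'''$ to have at least $2$ zeros in $(0, \varphi)$; a symmetric argument handles a dip in $(\varphi, 1)$. The closing step exploits that $h^{(k)}(y)$ is rational for $k \geq 2$ (e.g.\ $h''(y) = -1/(y(1-y)\ln 2)$), so $G'''$ is a rational function of $x$; clearing denominators produces an explicit cubic in $x$ with coefficients in $\mathbb{Z}[\varphi]$. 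Evaluating this cubic at $x = 0, \varphi, 1$ (using $\varphi^2 + \varphi = 1$ to simplify) together with the degree bound pins down exactly one root in each of $(0, \varphi)$ and $(\varphi, 1)$, contradicting the Rolle lower bound of $2$ roots in the dip-containing half. The technical heart is the explicit cubic computation and its sign analysis; it is finite and mechanical, but the interplay of $\varphi$-coefficients requires care.
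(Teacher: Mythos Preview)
The paper does not actually prove this claim: it records a computer verification and defers the rigorous argument to \cite{ahs}. So there is no in-paper proof to compare against.

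Your strategy is sound and self-contained. The reduction to $G(x)=\varphi\,h(x^2)-x\,h(x)\ge 0$ is immediate; the double zero at $\varphi$ (from $G(\varphi)=G'(\varphi)=0$) and the positivity near the endpoints via the expansion $G(x)\sim(1-2\varphi)x^2\log x$ are correct (the situation near $x=1$ is not literally ``symmetric'', but the analogous one-line expansion gives the same sign). The crucial structural point --- that $G'''$ involves only $h''$ and $h'''$, hence is rational --- is right: clearing the denominator $x(1-x)^2(1+x)^2\ln 2$ gives the cubic
\[
P(x) \;=\; -x^3 - 4\varphi\,x^2 + 3x + (2-4\varphi),
\]
and one checks $P(0)=2-4\varphi<0$, $P(\varphi)=7-11\varphi>0$, $P(1)=4-8\varphi<0$, so $P$ has exactly one root in each of $(0,\varphi)$ and $(\varphi,1)$, the third root lying in $(-\infty,0)$. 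Your Rolle cascade from a hypothetical dip of $G$ in one half-interval correctly forces at least two zeros of $G'''$ there (using $G''(\varphi)>0$ to ensure $G>0$ on both sides of $\varphi$), giving the contradiction. The whole argument is finite and mechanical, as you say; the only care needed is the exact cubic computation, which checks out.
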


\section{Analytic claims}

Let $f:[0,1]^2 \to \mathbb{R}_{\ge 0}$ be defined as
$$
f(x,y) := \frac{h(xy)}{h(x)y + h(y)x}
$$
for $(x,y) \in (0,1)^2$ and extended (continuously) to $[0,1]^2$ by setting $f(x,y) = 1$ if $x \in \{0,1\}$ or $y \in \{0,1\}$. 

\begin{claim}
\label{f_min}
The function $f$ is minimized at $(\varphi,\varphi)$. At this point $f(\varphi,\varphi)=\frac{1}{2 \varphi}$.
\end{claim}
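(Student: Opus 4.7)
The plan is to reduce the two-variable minimization to the one-variable Claim~\ref{h_frac_min} by establishing the geometric-mean symmetrization bound
\[
f(x,y) \;\ge\; f\bigl(\sqrt{xy},\,\sqrt{xy}\bigr) \qquad \text{for all } (x,y) \in (0,1)^2.
\]
Since both sides share the numerator $h(xy)$, this is equivalent to $y\,h(x) + x\,h(y) \le 2\sqrt{xy}\,h(\sqrt{xy})$; dividing through by $xy$ and setting $\alpha(t) := h(t)/t$, it becomes the midpoint inequality
\[
\tfrac12\bigl(\alpha(x) + \alpha(y)\bigr) \;\le\; \alpha\bigl(\sqrt{xy}\bigr).
\]
The single substantive task is therefore to show that $s \mapsto \alpha(e^s)$ is concave.

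I will verify this concavity by direct differentiation (passing to natural logs is harmless, since concavity is scale-invariant). A short calculation yields $\alpha'(x) = \ln(1-x)/x^2$, and then, writing $x = e^s$,
\[
\frac{d^2}{ds^2}\alpha(e^s) \;=\; -\frac{1}{1-x} \;-\; \frac{\ln(1-x)}{x}.
\]
Nonpositivity reduces, via the substitution $u = 1-x \in (0,1)$, to the elementary inequality $\ln u + 1/u \ge 1$, which holds because $g(u) := \ln u + 1/u$ satisfies $g(1) = 1$ and $g'(u) = (u-1)/u^2 \le 0$ on $(0,1]$. Strictness for $u \in (0,1)$ yields strict concavity, so the symmetrization inequality is strict unless $x = y$.

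Combining this with Claim~\ref{h_frac_min} applied to $t = \sqrt{xy}$ then gives
\[
f(x,y) \;\ge\; f(\sqrt{xy},\sqrt{xy}) \;=\; \frac{h(t^2)}{2\,t\,h(t)} \;\ge\; \frac{1}{2\varphi},
\]
with equality forcing $x = y$ (from strict log-concavity of $\alpha$) and $\sqrt{xy} = \varphi$ (from the equality case of Claim~\ref{h_frac_min}), so the minimum is attained uniquely at $(\varphi,\varphi)$. The value $f(\varphi,\varphi) = 1/(2\varphi)$ follows from $\varphi^2 = 1-\varphi$, which gives $h(\varphi^2) = h(\varphi)$. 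The boundary values $f \equiv 1$ exceed $1/(2\varphi)$, so the minimum is indeed interior. The main obstacle in this plan is the log-concavity step, which as above reduces cleanly to the one-variable inequality $\ln u + 1/u \ge 1$; no further difficulties are anticipated.
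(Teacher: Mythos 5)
Your proof is correct, and it takes a genuinely different route to the key step (reduction to the diagonal) than the paper does. The paper assumes an interior minimizer $(x^*,y^*)$ exists (justified by continuity and $f<1$ in the interior, $f=1$ on the boundary), sets the partial derivatives of $F(x,y)=g(xy)-\alpha(g(x)+g(y))$ to zero there, and deduces $x^*=y^*$ from the strict monotonicity of $G(x)=xg'(x)=\frac{\log(1-x)}{x}$. You instead prove the pointwise symmetrization bound $f(x,y)\ge f(\sqrt{xy},\sqrt{xy})$, which you correctly reduce to concavity of $s\mapsto h(e^s)/e^s$; your computation of the second derivative and the reduction to $\ln u+1/u\ge 1$ both check out. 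Amusingly, the two arguments rest on the same elementary fact: your $\frac{d}{ds}\alpha(e^s)=\frac{\ln(1-x)}{x}$ is exactly the paper's $G(x)$, so concavity of $\alpha(e^s)$ in $s$ is equivalent to $G$ being decreasing in $x$. What your version buys is a direct inequality valid at every interior point, so you never need to invoke the existence of a minimizer or first-order optimality conditions, and you get the equality case for free; the cost is the slightly more delicate setup of the geometric-mean symmetrization. One cosmetic remark: your claim of \emph{uniqueness} of the minimizer relies on the minimum in Claim~\ref{h_frac_min} being attained only at $\varphi$, which that claim does not explicitly assert; this does not affect the statement actually being proved.
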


\begin{proof}
First, by routine calculations one can verify that $f$ is indeed continuous on $[0,1]^2$ and that $f(x,y)<1$ for $(x,y) \in (0,1)^2$. Thus, the minimum of $f$ is attained in $(0,1)^2$. Next, let $g(x)=\frac{h(x)}{x}$, which is defined on $(0,1)$, and note that
$$
f(x,y) = \frac{g(xy)}{g(x)+g(y)}.
$$

We first show that $f$ is minimized on the diagonal, namely at some point $(x,x)$. Assume that $f$ is minimized at some point $(x^*,y^*)$, and let $\alpha = f(x^*,y^*)$. Define 
$$
F(x,y) = g(xy) - \alpha (g(x)+g(y)).
$$
Then $F(x,y) \ge 0$ for all $x,y \in (0,1)^2$ and $F(x^*, y^*)=0$. 
Thus the partial derivatives of $F$ must be zero at the minimum point:
$$
\frac{\partial F}{\partial x}(x^*,y^*)=\frac{\partial F}{\partial y}(x^*,y^*)=0.
$$
Evaluating the derivatives gives
$$
\frac{\partial F}{\partial x}(x,y) = g'(xy) \cdot y - \alpha g'(x), \qquad
\frac{\partial F}{\partial y}(x,y) = g'(xy) \cdot x - \alpha g'(y).
$$
Define $G(x)=x g'(x)$ and note that we obtained that $G(x^*)=G(y^*)$. A direct calculation gives $g'(x) = \frac{\log(1-x)}{x^2}$, which implies that $G$ is monotonically decreasing, and so we must have $x^*=y^*$.

Finally, restricting to $x=y$, we have
$$
f(x,x) = \frac{h(x^2)}{2xh(x)}.
$$
\Cref{h_frac_min} gives that $f(x,x)$ is minimized at $x=\varphi$. Since $\varphi^2 = 1- \varphi$ we have $h(\varphi^2)=h(\varphi)$ and hence
$$
f(\varphi, \varphi)=\frac{1}{2 \varphi}.
$$
\end{proof}

\begin{corollary}
\label{h_analytic}
For $x,y \in [0,1]$ we have
$$
h(xy) \ge \frac{1}{2 \varphi} \Big(x h(y) + y h(x) \Big).
$$
\end{corollary}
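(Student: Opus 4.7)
The plan is to simply unwind the definition of $f$ in \Cref{f_min} and handle the boundary cases by hand. For $(x,y) \in (0,1)^2$, \Cref{f_min} asserts
$$
\frac{h(xy)}{x h(y) + y h(x)} = f(x,y) \ge f(\varphi,\varphi) = \frac{1}{2\varphi},
$$
and multiplying through by the (positive) denominator $x h(y) + y h(x)$ yields precisely the desired inequality $h(xy) \ge \frac{1}{2\varphi}(x h(y) + y h(x))$.

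It then remains to check the boundary cases where $x \in \{0,1\}$ or $y \in \{0,1\}$, which are not covered by the ratio form. If $x = 0$, both sides vanish since $h(0) = 0$; the case $y = 0$ is symmetric. If $x = 1$, then $h(xy) = h(y)$ and $x h(y) + y h(x) = h(y) + y \cdot h(1) = h(y)$, so the inequality reduces to $h(y) \ge \frac{1}{2\varphi} h(y)$, which holds since $h(y) \ge 0$ and $2\varphi > 1$. The case $y=1$ is again symmetric.

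I do not anticipate any real obstacle here: the content is entirely in \Cref{f_min}, and this corollary is just a convenient algebraic restatement. The only mild subtlety is remembering that \Cref{f_min} is stated for the continuous extension of $f$, so one cannot blindly invoke it at the boundary (where the extension was set to $1$ by fiat, not derived from the ratio); hence the brief explicit check above.
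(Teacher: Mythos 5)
Your proof is correct and matches the paper's intent exactly: the corollary is stated without proof precisely because it is the direct unwinding of \Cref{f_min}, and your explicit verification of the boundary cases (where the extension $f=1$ is definitional rather than derived) is a careful touch the paper leaves implicit.
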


\section{Proof of the main theorem}

\begin{claim}
\label{h_union}
Let $A,B$ be two independent random variables taking values in $\{0,1\}^n$. Assume for all $i \in [n]$ that $\Pr[A_i=0] \ge p$ and $\Pr[B_i=0] \ge p$. Then 
$$
H(A \cup B) \ge \frac{p}{2 \varphi}\Big( H(A)+H(B)\Big).
$$
\end{claim}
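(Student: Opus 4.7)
I would use the entropy chain rule together with the independence of $A$ and $B$ to reduce to a coordinate-wise statement and then invoke \Cref{h_analytic}. Interpreting $A \cup B \in \{0,1\}^n$ as the coordinate-wise OR, the chain rule gives $H(A\cup B) = \sum_i H((A\cup B)_i \mid (A\cup B)_{<i})$. Since $(A\cup B)_{<i}$ is a deterministic function of $(A_{<i}, B_{<i})$, conditioning on the latter can only decrease entropy, and so
\[
H(A\cup B) \;\ge\; \sum_i H\bigl((A\cup B)_i \mid A_{<i}, B_{<i}\bigr).
\]
This is the formulation that lines up with the pointwise inequality in \Cref{h_analytic}.

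Next, for each $i$, set $x_a := \Pr[A_i = 0 \mid A_{<i} = a]$ and $y_b := \Pr[B_i = 0 \mid B_{<i} = b]$. Independence of $A$ and $B$ implies that, conditional on $(A_{<i}, B_{<i}) = (a,b)$, the bits $A_i$ and $B_i$ remain independent, so $\Pr[(A\cup B)_i = 0 \mid a,b] = x_a y_b$ and the corresponding conditional entropy equals $h(x_a y_b)$. Applying \Cref{h_analytic} pointwise yields
\[
h(x_a y_b) \;\ge\; \frac{1}{2\varphi}\bigl(x_a\, h(y_b) + y_b\, h(x_a)\bigr).
\]
Taking expectation over $(a,b)$ and using the independence of $A_{<i}$ and $B_{<i}$ to factor $\E[x_{A_{<i}} h(y_{B_{<i}})] = \E[x_{A_{<i}}]\, \E[h(y_{B_{<i}})]$ (and similarly for the symmetric term), then substituting $\E[x_{A_{<i}}] = \Pr[A_i=0] \ge p$, $\E[h(x_{A_{<i}})] = H(A_i \mid A_{<i})$, and the analogous identities for $B$, gives
\[
H\bigl((A\cup B)_i \mid A_{<i}, B_{<i}\bigr) \;\ge\; \frac{p}{2\varphi}\bigl(H(A_i \mid A_{<i}) + H(B_i \mid B_{<i})\bigr).
\]
Summing over $i$ and applying the entropy chain rule to each of $H(A)$ and $H(B)$ closes the argument.

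The delicate point is that the hypothesis $\Pr[A_i = 0] \ge p$ is only \emph{unconditional}, while \Cref{h_analytic} must be applied to the conditional probabilities $x_a, y_b$, which may individually fall below $p$. The argument survives precisely because the right-hand side of \Cref{h_analytic} is \emph{linear} in $x$ and in $y$ separately, so the expectation over $a$ and $b$ produces exactly the unconditional probabilities controlled by the hypothesis. Any weakening of \Cref{h_analytic} that made this right-hand side nonlinear in $x,y$ would break the reduction, so this linear form is the crucial feature.
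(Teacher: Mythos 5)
Your proposal is correct and follows essentially the same route as the paper: chain rule plus conditioning on $(A_{<i},B_{<i})$, pointwise application of \Cref{h_analytic} to the conditional probabilities, and averaging using the independence of the prefixes. Your closing observation about why the linearity of the right-hand side of \Cref{h_analytic} in $x$ and $y$ is what lets the unconditional hypothesis $\Pr[A_i=0]\ge p$ enter only after averaging is exactly the right point to flag.
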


\begin{proof}
The chain rule and data processing inequality yield
$$
H(A \cup B) = \sum_{i \in [n]} H(A_i \cup B_i | (A \cup B)_{<i}) \ge \sum_{i \in [n]} H(A_i \cup B_i | A_{<i}, B_{<i}).
$$
Let $p(x) = \Pr[A_i=0 | A_{<i}=x]$ and $q(y) = \Pr[B_i=0 | B_{<i}=y]$. Then by \Cref{h_analytic}
$$
H\Big(A_i \cup B_i | A_{<i}=x, B_{<i}=y\Big) = h\Big(p(x) q(y)\Big) \ge \frac{1}{2 \varphi} \Big( p(x) h(q(y)) + q(y) h(p(x)) \Big).
$$
Averaging over $A_{<i}, B_{<i}$ which are independent gives
\begin{align*}
H(A_i \cup B_i | A_{<i}, B_{<i}) 
&\ge \frac{1}{2 \varphi} \Big( \E_{A_{<i}} [ p(A_{<i})] \cdot \E_{B_{<i}}[h(q(B_{<i}))] +    \E_{B_{<i}} [q(B_{<i})] \cdot \E_{A_{<i}}[h(p(A_{<i}))]\Big)\\
&=\frac{1}{2 \varphi} \Big( \Pr[A_i=0] \cdot H(B_i|B_{<i}) + \Pr[B_i=0] \cdot H(A_i|A_{<i})\Big).
\end{align*}
Using the assumption that $\Pr[A_i=0] \ge p$ and $\Pr[B_i=0] \ge p$ gives
$$
H(A_i \cup B_i) \ge \frac{p}{2 \varphi} \Big( H(A_i|A_{<i}) + H(B_i|B_{<i})\Big).
$$
The claim follows by summing over $i \in [n]$.
\end{proof}

\begin{proof}[Proof of \Cref{thm:main}]
Let $\F$ be a $(1-\eps)$-approximate union closed family over $[n]$. Let $p=\min_{i \in [n]} \Pr_{A \in \F}[A_i=0]$, where our goal is to lower bound $1-p$. Let $A,B \in \F$ be uniformly and independently chosen. \Cref{h_union} then gives
$$
H(A \cup B) \ge \frac{p}{ 2\varphi} \Big(H(A)+H(B)\Big) = \frac{p}{\varphi} \log |\F|.
$$
Next we show that $H(A \cup B)$ cannot be much larger than $\log |\F|$. Let $I$ be the indicator for the event $A \cup B \in \F$, where by assumption $\Pr[I=1]\ge 1-\eps$. Then
$$
H(A \cup B) \le H(A \cup B, I) = H(I) + H(A \cup B|I=0) \Pr[I=0] + H(A \cup B|I=1) \Pr[I=1].
$$
We bound the terms one by one. 
First, since $I$ is binary and $\Pr[I=0]\le \eps <1/2$ we have $H(I) \le h(\eps) \le 2 \eps \log(1/\eps)$. Next, when $I=0$, we use the naive bound $H(A \cup B|I=0) \le H(A,B|I=0) \le 2 \log |\F|$. Finally, when $I=1$ we have that $A \cup B|I=1$ is a distribution supported on $\F$ and so $H(A \cup B|I=1) \le \log |\F|$. Putting these together gives
$$
\frac{p}{\varphi} \log|\F| \le H(A \cup B) \le 2 \eps \log(1/\eps) + (1+2 \eps) \log|\F|.
$$
We thus obtain
$$
1-p \ge 1-\varphi-2\eps \left(1+\frac{ \log(1/\eps)}{\log|\F|}\right).
$$
The proof follows, as $1-\varphi=\frac{3-\sqrt{5}}{2} = \psi$.
\end{proof}

\end{document}